\numberwithin{equation}{section}
   \theoremstyle{plain}
\newtheorem{theorem}{Theorem}
\newtheorem{proposition}{Proposition}
\newtheorem{corollary}{Corollary}
\newtheorem*{mainthm3-repeat}{Theorem \ref{mainthm3}}
\renewcommand{\leq}{\leqslant}
\renewcommand{\geq}{\geqslant}
\newsavebox{\proofbox}
\savebox{\proofbox}{\begin{picture}(7,7)  \put(0,0){\framebox(7,7){}}\end{picture}}
\newcommand\Z{\mathbb{Z}}
\newcommand\R{\mathbb{R}}
\newcommand\eps{\varepsilon}
\def\endproof{\hfill{\usebox{\proofbox}}\vspace{11pt}}
\begin{document}

\title{Contractions and Expansion}

\author{Emmanuel Breuillard}
\address{Laboratoire de Math\'ematiques\\
B\^atiment 425, Universit\'e Paris Sud 11\\
91405 Orsay\\
FRANCE}
\email{emmanuel.breuillard@math.u-psud.fr}

\author{Ben Green}
\address{Centre for Mathematical Sciences\\
Wilberforce Road\\
Cambridge CB3 0WA\\
England }
\email{b.j.green@dpmms.cam.ac.uk}

\maketitle

\begin{abstract}
Let $A \subseteq \R$ be a finite set and let $K \geq 1$ be a real number. Suppose that for each $a \in A$ we are given an injective map $\phi_a : A \rightarrow \R$ which fixes $a$ and contracts other points towards it in the sense that $|a - \phi_a(x)| \leq \frac{1}{K} |a - x|$ for all $x \in A$, and such that $\phi_a(x)$ always lies between $a$ and $x$. Then
\[ |\bigcup_{a \in A} \phi_a (A)| \geq \frac{K}{10} |A| - O_K(1).\]
An immediate consequence of this is the estimate $|A + K \cdot A| \geq \frac{K}{10} |A| - O_K(1)$, which is a slightly weakened version of a result of Bukh.
\end{abstract}

\begin{center} \emph{To the memory of Yayha Hamidoune} \end{center}

\section{Introduction}

In this short note we consider the behaviour of a set $A \subseteq \R$ under a collection of maps $\phi_a : A \rightarrow \R$. Let $K \geq 1$ be a parameter. We will assume that these maps have the following properties:

\begin{enumerate}
\item $\phi_a$ is injective;
\item $\phi_a(a) = a$;
\item $\phi_a$ is a $K$-contraction in the sense that $|a - \phi_a(x)| \leq \frac{1}{K} |a - x|$ for all $x \in A$;
\item $\phi_a(x)$ lies between $a$ and $x$.
\end{enumerate}

\begin{theorem}\label{mainthm}
Suppose that $A \subseteq \R$ is a finite set of size $n$ and that we have maps $\phi_a$ as above.  Then \[ |\bigcup_{a \in A} \phi_a(A) | \geq \textstyle\frac{1}{8} \displaystyle Kn (1 - n^{-1/K^2}).\]
\end{theorem}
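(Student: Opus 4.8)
The plan is to reduce the problem to a statement about the set $A$ itself and the union $S := \bigcup_{a} \phi_a(A)$, and then to exploit the contraction property at many scales simultaneously. Order $A$ as $a_1 < \cdots < a_n$ and write $D = a_n - a_1$. Property (iv) forces $S \subseteq [a_1,a_n]$, so everything happens inside this interval. The basic tool I would establish first is a \emph{compression inequality}: for every index $i$ and every length $L \ge 0$, the map $\phi_{a_i}$ sends each $x \in A$ with $a_i \le x \le a_i + KL$ into the interval $[a_i, a_i + L]$ (by (iii) and (iv)), and does so injectively (by (i)), so
\[ |S \cap [a_i, a_i + L]| \ \ge\ |A \cap [a_i, a_i + KL]|, \]
together with the mirror-image statement obtained from $\phi_{a_i}$ applied to points to the left of $a_i$. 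This is the only place the hypotheses enter, and it quantifies the sense in which $S$ is locally $K$ times denser than $A$.

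The engine I would then use is a disjoint peeling recursion. Applying the inequality with $i=1$ and $KL = D$ shows that $\phi_{a_1}(A)$ consists of $n$ distinct points of $S$ lying in the end-interval $[a_1, a_1 + D/K]$. Moreover, if $A' := A \cap (a_1 + D/K, a_n]$ is the set obtained by deleting this end-interval, then $\bigcup_{a \in A'} \phi_a(A') \subseteq (a_1 + D/K, a_n]$ is disjoint from $[a_1, a_1 + D/K]$, which yields $|S(A)| \ge |A| + |S(A')|$, where $S(A')$ is the analogous union for $A'$. Since the diameter contracts by a factor $1 - 1/K$ at each peel, iterating this (choosing at each stage the end whose removal discards fewer points) produces a sum $\sum_t n_t$ of the successive cardinalities; for an arithmetic progression the resulting geometric series already sums to $\approx Kn$, matching the target.

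The main obstacle, and where the factor $\tfrac18 K(1 - n^{-1/K^2})$ must really come from, is that this peeling is lossy precisely when $A$ clusters at several different scales (lacunary or nested configurations). The difficulty is intrinsic to the hypotheses: (iii) is only an \emph{upper} bound on $|a - \phi_a(x)|$, so for a source point $a$ far from a given subinterval one has no control over where $\phi_a(x)$ lands, and the disjoint recursion silently discards exactly these ``cross-scale'' images --- which for the model contraction $\phi_a(x) = a + (x-a)/K$ are what fill out the bulk of $S$. To recover them robustly I would sum the compression inequality over a geometric hierarchy of scales $D, D/K, D/K^2, \dots$, bounding the multiplicity with which a given point of $S$ is counted. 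The number of scales that contribute before the available $n$ points are exhausted is of order $\log n$ tempered by $K$, and optimising the resulting geometric sum against the worst-case distribution of $A$ is what yields the correction $n^{-1/K^2}$. I expect this worst-case multiscale optimisation --- controlling overlaps while summing the local density gains --- to be the crux, with the compression inequality itself being routine.
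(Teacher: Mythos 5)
Your compression inequality is correct, and it is indeed the right local mechanism (it is the paper's ``convexity'' observation plus injectivity, made quantitative). Your diagnosis of the one-sided peeling is also accurate: on a set consisting of two clusters of size $n/2$ at the two ends of $[0,1]$, each of diameter at most $1/K$, peeling certifies only $|S| \geq n + F(n/2)$, and on nested configurations of this kind the iteration yields no more than about $2n$, far short of $\frac{1}{8}Kn$. So everything hinges on your third step, and that step is not a proof --- it is a restatement of the problem. ``Sum the compression inequality over a geometric hierarchy of scales, bounding the multiplicity with which a given point of $S$ is counted'' names the difficulty without supplying the idea that overcomes it: in exactly the nested configurations you identify, intervals at different scales overlap heavily, and deciding which centres at which scales to use, disjointly, is the whole problem. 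As it stands, your outline has a genuine gap at its crux.

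For comparison, the paper closes this gap with a structural dichotomy that produces a \emph{two-sided} recursion rather than a one-sided peel. Normalise $A \subseteq [0,1]$ with $0,1 \in A$. Either (Case 1) some $a_* \in A$ has $|A \cap [a_*-1/K, a_*+1/K]| \leq 6n/K$; then $\phi_{a_*}(A)$ places $n$ points in this middle interval, disjoint from the images of $A_1 = A \cap [0,a_*-1/K)$ and $A_2 = A \cap (a_*+1/K,1]$ under their own maps, giving $F(n) \geq F(n_1)+F(n_2)+n$ with $n_1+n_2 \geq (1-6/K)n$. Or (Case 2) every such neighbourhood contains more than $6n/K$ points of $A$; then a covering argument shows $A$ is covered by fewer than $K/3$ intervals of length $2/K$, so some interval $(a^*,a^*+1/K]$ with $a^* \in A$ contains no point of $A$, and --- this is precisely the ``cross-scale'' image your peel discards --- property (iv) forces $\phi_{a^*}(A_2)$ \emph{into this gap}, disjoint from both sides' own images, giving $F(n) \geq F(n_1)+F(n_2)+6n/K$ with $n_1+n_2 = n$ and the gain $6n/K$ coming from $|A_2| > 6n/K$. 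In both cases one recurses on \emph{both} pieces and keeps a cross term, and the induction then closes against the explicit function $\frac{1}{8}Kn(1-n^{-1/K^2})$; the correction $n^{-1/K^2}$ arises there from elementary inequalities such as $48/K^2 \geq 2^{1/K^2}-1$, not from a multiplicity count over scales. Without this dichotomy, or a concrete substitute for it, your plan does not yet prove the theorem.
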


\emph{Remark.} The bound we have given here looks a little odd, but it is convenient for our proof. Note that it is at least $\frac{1}{10}Kn - O(e^{CK^2})$, a slightly more precise version of the bound stated in the abstract.

An immediate corollary of this theorem is the following. Here, $A + K \cdot A := \{ a + Ka' : a, a' \in A\}$.

\begin{corollary}\label{bukh-cor}
Suppose that $A \subseteq \R$ is a finite set and that $K \geq 1$ is a real number. Then $|A + K \cdot A| \geq \frac{1}{10}K|A| - O(e^{CK^2})$.
\end{corollary}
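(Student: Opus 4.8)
The plan is to deduce the corollary directly from Theorem~\ref{mainthm} by exhibiting $A + K\cdot A$, after a harmless rescaling, as a union $\bigcup_{a\in A}\phi_a(A)$ for a well-chosen family of affine contractions. For each $a\in A$ I would take the map
\[ \phi_a(x) = a + \frac{1}{K+1}(x - a) = \frac{1}{K+1}(Ka + x), \]
i.e. the affine map fixing $a$ that pulls every other point of $A$ towards $a$ by the factor $K+1$. Note the deliberate choice of contraction factor $K+1$ rather than $K$: this is exactly what will make the dilation coefficient come out to $K$ below.

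First I would check the four hypotheses, now with contraction parameter $K+1$ in place of $K$. Injectivity is immediate since $\phi_a$ is affine with nonzero slope; $\phi_a(a)=a$ by construction; the identity $|a-\phi_a(x)| = \frac{1}{K+1}|a-x|$ shows $\phi_a$ is a $(K+1)$-contraction; and $\phi_a(x)$ lies between $a$ and $x$ because the factor $\frac{1}{K+1}$ lies in $(0,1]$. The key computation is then that, since $\phi_a(x) = \frac{1}{K+1}(Ka + x)$,
\[ \bigcup_{a\in A}\phi_a(A) = \frac{1}{K+1}\{Ka + x : a,x\in A\} = \frac{1}{K+1}(A + K\cdot A), \]
and scaling by the fixed constant $\frac{1}{K+1}$ preserves cardinality, so $\bigl|\bigcup_{a}\phi_a(A)\bigr| = |A + K\cdot A|$. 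Applying Theorem~\ref{mainthm} with parameter $K+1$ (legitimate since $K+1\geq 1$) yields $|A + K\cdot A| \geq \frac18(K+1)n\bigl(1-n^{-1/(K+1)^2}\bigr)$, where $n = |A|$.

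Finally I would convert this into the stated form using the remark following the theorem. Since $\frac{1}{10}(K+1)n \geq \frac{1}{10}Kn$, the main term is already at least what is wanted, and only the error term needs attention: the quantity $O(e^{C(K+1)^2})$ produced at parameter $K+1$ can be reabsorbed into $O(e^{C'K^2})$ by enlarging the constant, using the crude bound $(K+1)^2 \leq 4K^2$ for $K\geq 1$. This gives $|A + K\cdot A| \geq \frac{1}{10}K|A| - O(e^{CK^2})$, as required.

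Since the substantive content lives entirely in Theorem~\ref{mainthm}, I expect the verifications here to be routine; the one point genuinely requiring care is the bookkeeping around the shift from $K$ to $K+1$. One must confirm both that this shift does not weaken the leading $\frac{1}{10}Kn$ term (it does not, as the term only grows) and that it does not inflate the error term past the permitted $O(e^{CK^2})$ (it does not, by the quadratic comparison above).
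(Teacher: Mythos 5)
Your proposal is correct and takes essentially the same approach as the paper: the paper's proof uses exactly the same maps $\phi_a(x) = (x+Ka)/(K+1)$ and deduces the corollary immediately from Theorem \ref{mainthm}. The only (immaterial) difference is bookkeeping: the paper invokes the theorem at parameter $K$, since a $(K+1)$-contraction is in particular a $K$-contraction, whereas you invoke it at parameter $K+1$ and then compare $(K+1)^2 \leq 4K^2$; both choices yield the stated bound.
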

\begin{proof} Simply apply the theorem with $\phi_a(x) := (x + Ka)/(K+1)$. These maps obviously verify (i) -- (iv) above.\end{proof}

We note that Bukh \cite{bukh} established a much more precise result when $K \in \Z$, namely that $|A + K \cdot A| \geq (K+1)|A| - o(|A|)$. Assuming that $K$ is an integer should not make things any easier, and furthermore our approach would appear not to generalise to the more general sums of dilates $\lambda_1 \cdot A + \dots + \lambda_t \cdot A$ considered by Bukh. Let us also note that Cilleruelo, Hamidoune and Serra \cite{chs} obtained an extremely precise result when $K$ is prime, establishing that $|A + K \cdot A| \geq (K+1)|A| - \lceil K(K+2)/4\rceil$ for $|A| \geq 3(K-1)^2 (K-1)!$.

\section{Proof of the main theorem}

In this section we prove Theorem \ref{mainthm}. Let $F(n)$ be the minimum size of $\bigcup_{a \in A} \phi_a(A)$ over all sets $A$ of size $n$. We will obtain a lower bound for $F(n)$ in terms of the values of $F(n')$, $n' < n$; we may then proceed by induction.

We will use the (obvious)  \emph{convexity} property of maps $\phi_a$ satisfying (i) -- (iv) above, namely that $\phi_a(I) \subseteq I$ for any interval $I$ containing $a$.

We clearly have $F(1) = 1$, so suppose that $A \subseteq \R$ is a set of size $n \geq 2$. We may rescale so that the extreme points of $A$ are $0$ and $1$. Suppose that there is some $a_* \in A$ such that $|A \cap [a_* - 1/K, a_* + 1/K|| \leq 6n/K$. Write $A_1 := A \cap [0, a_* - 1/K)$ and $A_2 := A \cap (a_* + 1/K, 1]$, and set $n_1 := |A_1|$, $n_2 := |A_2|$. Then $\phi_{a_*}$ contracts all of $A$ into the interval $[a_* - 1/K, a_* + 1/K]$. By induction and the convexity property we have
\begin{align}\nonumber F(n) & \geq |\bigcup_{a \in A_1} \phi_a(A_1)| + |A| + |\bigcup_{a \in A_2} \phi_a (A_2)| \\ & \geq F(n_1) + n + F(n_2).\label{case1}\end{align}
Note that $n_1 + n_2 \geq (1 - 6/K) n$. 

Alternatively, suppose there is no such $a_*$. Obviously $A = A \cap \bigcup I_a$, where $I_a = [a - 1/K, a+ 1/K]$. We may pass to disjoint subcollections $\bigcup_{a \in S_1} I_a$ and $\bigcup_{a \in S_2} I_a$ whose union is $\bigcup_{a \in A} I_a$ (cf. \cite{croft}). By assumption, $|A \cap I_a| > 6n/K$, and therefore $|S_1|, |S_2| < K/6$. It follows that $A$ is covered by $< K/3$ intervals of length $2/K$, and hence there is some $a^* \in A$, $a^* < 1$, such that $A$ is disjoint from $(a^*, a^* + 1/K]$. Set $A_1 := A \cap [0, a^*]$ and $A_2 := (a^* + 1/K, 1]$, and set $n_1 := |A_1|$, $n_2 := |A_2|$; note that $n_1 + n_2 = n$.  Note also that $\phi_{a^*}(A_2) \subseteq (a^*, a^* + 1/K]$; here, we make crucial use of property (iv), which asserts that $\phi_{a_*}(x)$ lies between $a_*$ and $x$.

By the convexity property and the above observations,
\[ F(n) \geq |\bigcup_{a \in A_1} \phi_a(A_1)| + |A_2| + |\bigcup_{a \in A_2} \phi_a (A_2)| \geq |A_2| + F(n_1) + F(n_2).\]
Note, however, that $A_2$ contains $1$ and hence $A \cap I_1$, a set of size $> 6n/K$. Therefore
\begin{equation}\label{case2} F(n) \geq \frac{6n}{K} + F(n_1) + F(n_2)\end{equation} in this case.

We have established that for each $n$ there are $n_1, n_2 < n$ such that either \eqref{case1} or \eqref{case2} holds. That is, 

\begin{align}\nonumber F(n) \geq \min \big( \min_{\substack{n_1, n_2 < n \\ n_1 + n_2 = n}} & (F(n_1) + F(n_2) + \frac{6n}{K}), \\ & \min_{\substack{n_1, n_2 < n \\  n_1 + n_2 \geq n(1 - 6/K)}} (F(n_1) + F(n_2) + n) \big).\label{inequalities}\end{align}

It remains to verify, by induction on $n$, that $F(n) \geq \frac{1}{8} Kn (1 - n^{-1/K^2})$. 

Dealing with the first inequality immediately reduces to showing that
\[ \frac{48 n}{K^2} \geq n_1^{1 - 1/K^2} + n_2^{1-1/K^2} - n^{1 - 1/K^2}\] whenever $n_1 + n_2 = n$. But the largest value of the right-hand side is never more than when $n_1 = n_2 = n/2$, and it is then enough to note that $48/K^2 \geq 2^{1/K^2} - 1$. 

Checking the second inequality is even easier and amounts, using the inequality $n_1 + n_2 \geq (1 - 6/K) n$, to establishing that
\[ \frac{2n}{K} \geq n_1^{1-1/K^2} + n_2^{1- 1/K^2} - n^{1 - 1/K^2}\] under the assumption that $n_1 + n_2 \leq n$.  This completes the proof of Theorem \ref{mainthm}.\endproof

\section{Some further remarks.}

Condition (iv) above, that $\phi_a(x)$ always lies between $a$ and $x$, may seem a little unnatural.  In this section we note that Theorem \ref{mainthm} fails without this assumption.

\begin{proposition}
Let $K \geq 1$ be arbitrary. Then there are arbitrarily large finite sets $A$ together with collections of maps $\phi_a : A \rightarrow \R$ satisfying \textup{(i)}, \textup{(ii)} and \textup{(iii)} above but such that 
\[ |\bigcup_{a \in A} \phi_a (A)| = 2|A|.\]
\end{proposition}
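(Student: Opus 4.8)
The plan is to construct a counterexample showing that without condition (iv), the expansion can be as small as $2|A|$.

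First I would think about what condition (iv) actually buys us. The key point is that (iv) forces $\phi_a(x)$ to land strictly between $a$ and $x$, so a contraction map genuinely moves points toward $a$ without overshooting. If we drop (iv), we are free to let $\phi_a$ reflect points across $a$ or permute them in ways that keep images tightly clustered. The natural idea is to exploit the injectivity requirement (i) together with the contraction bound (iii) to engineer a situation where all the images $\bigcup_a \phi_a(A)$ pile up into a very small set — ideally a set of size roughly $2|A|$ rather than $\sim K|A|$.

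Next I would look for an algebraic structure that makes contractions cheap. The most promising candidate is a geometric progression: take $A = \{r^0, r^1, \dots, r^{m-1}\}$ for a suitable ratio $r$ (or perhaps $A \subseteq \{\pm r^j\}$ to exploit sign symmetry), where $r$ is chosen so that multiplication by some fixed factor realizes the required $1/K$-contraction. The crucial observation is that in a geometric setting the map $x \mapsto a + c(x-a)$ for a well-chosen constant $c$ can send $A$ into a translate or rescaling of $A$ itself, so that $\phi_a(A)$ always lands inside a small fixed set independent of $a$. By tuning the construction so that every $\phi_a(A)$ sits inside $A \cup (\text{one extra shifted copy})$, one arranges $|\bigcup_a \phi_a(A)| = 2|A|$. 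The sign flip (reflecting across $a$) permitted once (iv) is dropped is exactly what lets the images overlap so heavily while preserving injectivity of each individual $\phi_a$.

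I expect the main obstacle to be simultaneously satisfying injectivity (i) for \emph{every} $\phi_a$, the exact fixed-point condition (ii), and the contraction bound (iii) with the right constant $1/K$, while still forcing the union of images to collapse to exactly $2|A|$ points. Injectivity is delicate because although each $\phi_a$ must be injective on $A$, we \emph{want} maximal collisions across different values of $a$, and these two demands pull in opposite directions; getting the bookkeeping right so that exactly two ``layers'' of images appear will require care. I would verify the construction by checking (i)--(iii) directly on the explicit set, and confirm that the family can be taken arbitrarily large by letting $m \to \infty$, which completes the proposition.
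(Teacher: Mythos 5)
Your overall goal is the right one (drop (iv) and force all images into a fixed set of size $2|A|$), and you correctly identify the central tension between per-map injectivity and cross-map collisions; but the construction you propose cannot be repaired, for a reason independent of how cleverly the maps are chosen. Suppose $A = \{r^j : 0 \leq j \leq m-1\}$ with ratio $r \leq 1/4$ and $K$ large, and let $\phi_a$ be \emph{any} maps satisfying (i) and (iii). Fix $j$ and look at the points of $A$ below $r^j$: all $m-1-j$ of them lie within distance $r^j$ of $r^j$, so by (iii) their images under $\phi_{r^j}$ lie in the interval $I_j := [r^j(1-1/K),\, r^j(1+1/K)]$, and by (i) these images are $m-1-j$ distinct points. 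Since $r(1+1/K) < 1-1/K$, the intervals $I_j$ are pairwise disjoint, so $|\bigcup_a \phi_a(A)| \geq \sum_{j=0}^{m-1}(m-1-j) = m(m-1)/2$, which is quadratic in $|A|$, not $2|A|$. The same count (with windows of radius $2r^j/K$) kills the variant $A = \{\pm r^j\}$. The root cause is that a geometric progression has only \emph{one} point per scale: the contraction centred at $r^j$ must deposit a large cluster of distinct image points into a ``private zone'' near $r^j$ that no other map can reach, and these private zones add up quadratically. Your specific mechanism $\phi_a(x) = a + c(x-a)$ fares no better even locally: its image is the translate $(1-c)a + cA$, whose position depends on $a$, and for distinct $a$ these translates are essentially disjoint, again giving $\approx |A|^2$ points rather than collisions.

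The missing idea is to use a set in which \emph{every} point has the same local structure at every scale, with multiplicity. The paper takes $A$ to be the Cantor-like set of subset sums $\{\eps_n(4K)^{-n} + \dots + \eps_1(4K)^{-1} : \eps_i \in \{0,1\}\}$ --- the leaves of a binary tree of depth $n$ --- and as target the depth-$(n+1)$ refinement $\tilde A$, which has size exactly $2|A|$. The basic map $\phi_0(x) = x/4K$ is an injective contraction of $A$ into $\tilde A$ fixing $0$; since distances in $\tilde A$ are determined up to a bounded factor by branching depth, every binary tree automorphism of $\tilde A$ is $2$-biLipschitz, and one can choose such an automorphism $\psi_x$ with $\psi_x(A) = A$ and $\psi_x(0) = x$ for any prescribed $x \in A$. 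The conjugates $\phi_x := \psi_x \circ \phi_0 \circ \psi_x^{-1}$ are then injective $1/K$-contractions fixing $x$ whose images all lie in $\tilde A$, so the union of images has size $2|A|$. Your geometric progression is exactly the degenerate ``one point per scale'' case of this; replacing it by the full set of $\{0,1\}$-combinations of the progression is what makes the collisions possible.
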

\begin{proof}
Assume that $K > 10^4$ (say). Consider the set
\[ A := \{ \eps_n (4K)^{-n} + \dots + \eps_1 (4 K)^{-1} : \eps_1,\dots,\eps_n \in \{0,1\}\},\]
and define 
\[ \tilde A := \{ \eps_{n+1} (4 K)^{-n-1} + \dots + \eps_1 (4 K)^{-1} : \eps_1,\dots, \eps_{n+1} \in \{0,1\}\}.\]
Obviously $|\tilde A| = 2 |A|$. There is an obvious map $\phi_0 : A \rightarrow \tilde A$ defined by $\phi_0(x) = x/4K$; this clearly satisfies $\phi_0(0) = 0$, and is a $1/4K$-contraction. Now for each $x \in A$ there is a bijection $\psi_x : \tilde A \rightarrow \tilde A$ with $\psi_x(A) = A$, $\psi_x(0) = x$, and which is $2$-biLipschitz in the sense that 
\[ \textstyle\frac{1}{2}|t - t'| \leq |\psi_x(t) - \psi_x(t')| \leq 2|t - t'|\] for all $t, t' \in \tilde A$. Such a map may be constructed by viewing $\tilde A$ as the set of vertices of a binary tree of depth $n+1$ and then applying a suitable binary tree automorphism. The distance between two nodes is determined, up to a factor of at most $1.01$, by the point in the tree at which they branch (or equivalently the smallest $m$ for which $\eps_m \neq \eps'_m$), and this is preserved by any tree automorphism.

Now define $\phi_x : A \rightarrow \tilde A$ by $\phi_x := \psi_x \circ \phi_0 \circ \psi_x^{-1}$. This is well-defined because $\psi_x^{-1}(A) = A$, and $\phi_0$ is defined on $A$. Obviously $\phi_x(x) = x$. Furthermore $\phi_x$, being a composition of a $1/4K$-contraction and two $2$-biLipschitz maps, is a $1/K$-contraction. Each $\phi_x$ is injective, and finally $|\bigcup_{a \in A} \phi_a(A)| = |\tilde A| = 2|A|$. This completes the proof.\end{proof}

It is clear that one cannot remove the factor of $2$ in this proposition if $K \geq 2$. Indeed if $a_1,a_2$ are the extreme points of $A$ then $\phi_{a_1}(A)$ and $\phi_{a_2}(A)$ are disjoint and both have cardinality $|A|$.

Finally let us note that one should not expect a version of Corollary \ref{bukh-cor} with polynomial dependencies throughout. Indeed the set 
\[ A := \{ n_0 + n_1 K + \dots + n_{d-1} K^{d-1} : n_0,\dots, n_{d-1} \in [N]\}\]
has $|A + K \cdot A| \leq 2^d N^{d+1} = 2^{d} N |A|$. Taking $d \sim \frac{1}{2}\log_2 L$ and $N := L/2^d$ we may ensure that $|A + K \cdot A| \leq L|A|$ whilst $|A| = N^d \geq e^{c (\log  L)^2}$. In particular, if $L \sim e^{(\log K)^{3/4}}$ then we have $|A + K \cdot A| \leq K^{o(1)} |A|$ whilst $|A| \geq K^{c(\log K)^{1/2}}$.

\section{Acknowledgement}

The authors would like to thank Boris Bukh for reading an earlier draft of this paper.

\end{document}